\DeclareMathOperator{\ggcd}{gcd}
\newcommand{\tikzcircle}[2][red,fill=red]{\tikz[baseline=-0.5ex]\draw[#1,radius=#2] (0,0) circle ;}%
\theoremstyle{theorem}
\newtheorem{theorem}{Theorem}
\newtheorem{proposition}[theorem]{Proposition}
\theoremstyle{definition}
\newtheorem{definition}{Definition}
\newtheorem{example}[theorem]{Example}
\begin{document}

\title{Lattice Point Visibility on Generalized Lines of Sight}
\author{Edray H. Goins, Pamela E. Harris, Bethany Kubik, and Aba Mbirika}

\date{\today}

\keywords{Riemann-zeta function, lattice point visibility, greatest common divisor, Chinese remainder theorem}
\subjclass[2010]{Primary 11P21, 11M99}

\begin{abstract}
For a fixed $b\in\mathbb{N}=\{1,2,3,\ldots\}$ we say that a point $(r,s)$ in the integer lattice
$\mathbb{Z} \times \mathbb{Z}$ is $b$-visible from the origin if it lies on the graph of a power function $f(x)=ax^b$ with $a\in\mathbb{Q}$ and no other integer lattice point lies on this curve (i.e., line of sight) between $(0,0)$ and $(r,s)$.
We prove that the proportion of $b$-visible integer lattice points is given by $1/\zeta(b+1)$, where $\zeta(s)$ denotes the Riemann zeta function. We also show that even though the proportion of $b$-visible lattice points approaches $1$ as $b$ approaches infinity,  there exist arbitrarily large rectangular arrays of $b$-invisible lattice points for any fixed $b$.  This work specialized to $b=1$ recovers original results from the classical lattice point visibility setting where the lines of sight are given by linear functions with rational slope through the origin. 
\end{abstract}

\maketitle

%

\section{Introduction}
A point $(r,s)$ in the integer lattice $\mathbb{Z} \times \mathbb{Z}$ is said to be visible from the origin if it lies on a straight line through the origin $(0,0)$ and no other lattice point lies on this line of sight between $(0,0)$ and $(r,s)$. Given this definition, it is natural to ask what proportion of lattice points are visible from the origin, which is equivalent to computing the probability that two integers are relatively prime. This problem was first addressed in the 1800s by numerous people including: Dirichlet, who proved a weaker form of the problem in 1849~\cite{Dir1849}; Ces\`aro, who is often attributed as having posed this problem in 1881~\cite{Ces1881}; and Sylvester, who along with Ces\`aro gave independent proofs of this result in 1883~\cite{Ces1883, Syl1883}.
Ces\`{a}ro proved that the probability that two randomly chosen integers in $\{1,2,\ldots, n\}$ are coprime is given by $1/\zeta(2)$ as $n$ approaches infinity, where $\zeta(s) = \sum_{n=1}^\infty 1/n^s$ denotes the Riemann zeta function~\cite{Ces1881}. Thus, the proportion of visible integer lattice points is given by $1/\zeta(2) = 6/\pi^2 \approx .608$.

In 1971, Herzog and Stewart characterized patterns of visible (respectively, invisible) points within the approximately 60\%  (respectively, 40\%) of the lattice containing visible (respectively, invisible) points~\cite{HerzogStewart} and their seminal work continues to motivate research in this area~\cite{Adhikari, Adhikari.vip, Chen, Mbirika2015, Laishram, Laison, Nicholson}. 
Additionally, it has been shown that the set of lattice points in the plane visible from the origin contains arbitrarily large square arrays of adjacent invisible lattice points \cite[Theorem 5.29, p. 119]{Apostol}. This is connected to a celebrated result in number theory regarding the existence of two mutually pairwise coprime sets of consecutive integers. Since then, others have further studied properties of strings of consecutive composite numbers and their connection to integer lattice point visibility~\cite{De, Eggleton, Schumer}.

\begin{figure}[h!]
\begin{center}
\resizebox {.6\textwidth} {!} {
\begin{tikzpicture}[yscale=.1,xscale=2]
		
\draw [<->, ultra thick, blue] (0,85) -- (0,0) -- (6.25,0);

\foreach \x in {1,...,6}
		\draw [very thin] (\x,0) -- (\x,80);

\foreach \x in {1,...,6}
	\node [above] at (\x,\x*10) {};

\foreach \x in {1,...,6}
	\node [above] at (\x,2*\x*\x) {};

\foreach \x in {10,20,...,70,80}
		\draw [very thin] (0,\x) -- (6,\x);

\draw[very thick,red] (0,0) -- (6,60);
\draw[very thick,red] (0,0) parabola (6,72);

\node at (1,10) {\tikzcircle[fill=white]{2.5pt}};
\foreach \x in {2,...,6}
	\node at (\x,\x*10) {\tikzcircle[fill=black]{2.5pt}};

\node at (1,2) {\tikzcircle[fill=white]{2.5pt}};
\foreach \x in {2,...,6}
	\node at (\x,2*\x*\x) {\tikzcircle[fill=black]{2.5pt}};

\node [above left] at (1,10) {\small(1,10)};
\node [above left] at (2,20) {\small(2,20)};
\node [above left] at (3,30) {\small(3,30)};
\node [above left] at (4,40) {\small(4,40)};
\node [above left] at (5,50) {\small(5,50)};
\node [above left] at (6,60) {\small(6,60)};

\node [above left] at (1,2) {\small(1,2)};
\node [above left] at (2,8) {\small(2,8)};
\node [above left] at (3,18) {\small(3,18)};
\node [above left] at (4,32) {\small(4,32)};
\node [above left] at (6,72) {\small(6,72)};

\foreach \x in {1,...,6}
		\node [below] at (\x,0) {\x};
\foreach \x in {10,20,...,80}
		\node [left] at (0,\x) {\x};
\node at (0,0) {\includegraphics[width=8ex]{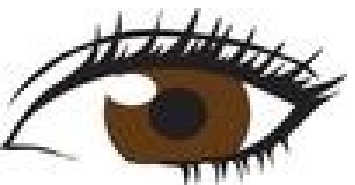}};
\end{tikzpicture}}
\end{center}
\caption{Lines of sight $f(x) = 10x$ and $g(x) = 2 x^2$ with visible and invisible points.}
\label{fig:two_lines_of_sights}
\end{figure}

In this work, we fix $b\in\mathbb{N}$ and say that a point $(r,s)$ in the integer lattice $\mathbb{Z} \times \mathbb{Z}$ is $b$-visible from the origin if it lies on the graph of a power function $f(x)=ax^b$ with $a\in\mathbb{Q}$ and no other integer lattice point lies on this curve (i.e., line of sight) between $(0,0)$ and $(r,s)$. Hence, 
our work specialized to $b=1$ recovers the classical setting of lattice point visibility whose lines of sight are given by linear functions $f(x) = ax$ with $a \in \mathbb{Q}$. We remark that throughout this work, following the wording introduced by P\'olya, we often refer to lattice points as trees and collections of adjacent trees as forests~\cite{Allen, Polya}.

Figure~\ref{fig:two_lines_of_sights} contains two examples of lines of sight 
on which we mark the lattice points that are visible with white nodes and those that are invisible with black nodes. Figure \ref{fig:InvisibleGrid} marks 
the $b$-invisible lattice points in the square $[0,50]\times[0,50]$ for $b=1,2,3,4$.  Note that the number of $b$-visible points increases substantially relative to a small growth in $b$
even in this small portion of the integer lattice. This observation, presented in Table~\ref{table:observed_and_expected_data}, leads us naturally to our first result. 
\begin{figure}[h!]
    \centering
    \begin{subfigure}[t]{0.4\textwidth}
        \centering
        \includegraphics[width=\textwidth]{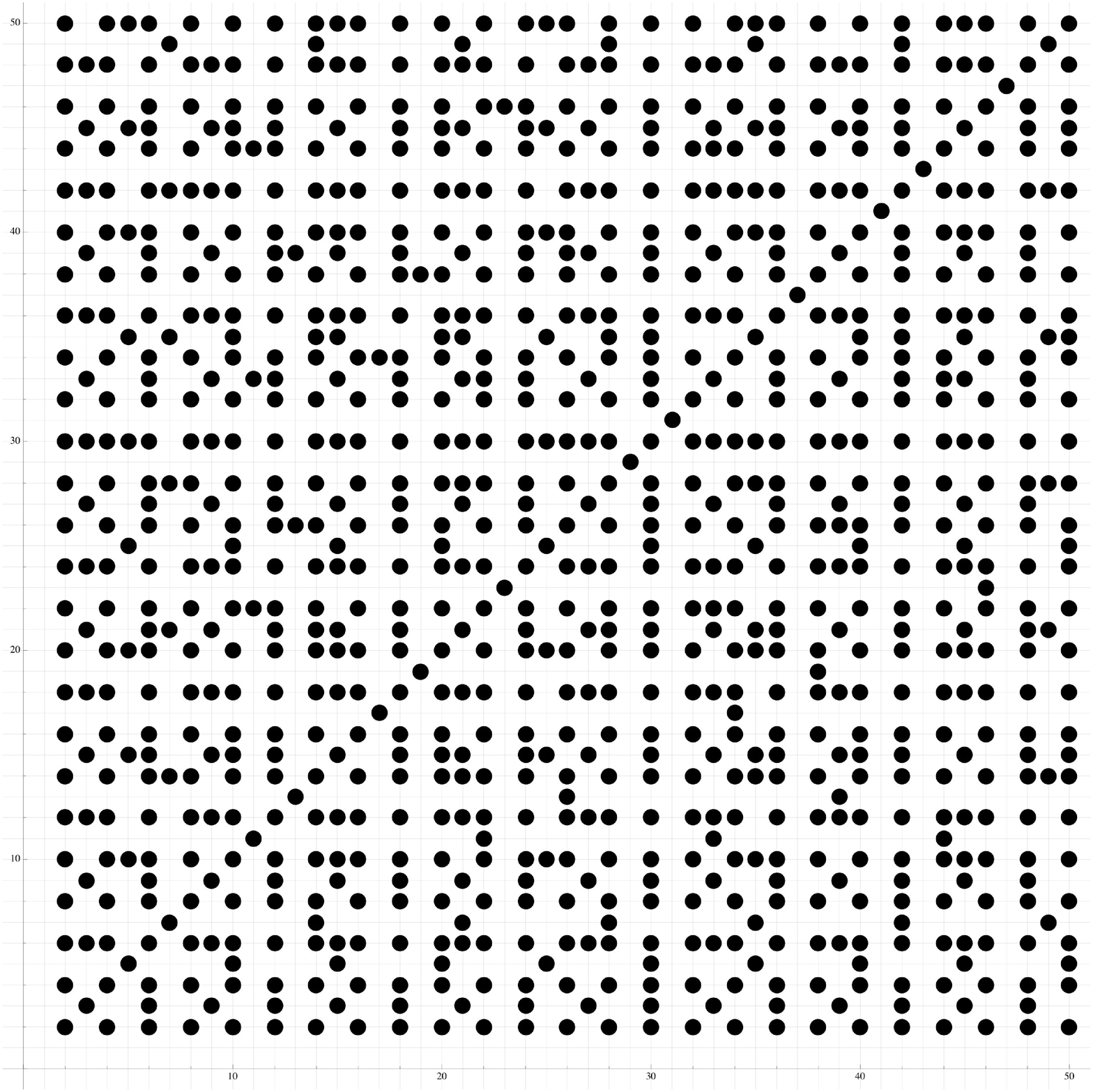}
        \caption{$b=1$}
    \end{subfigure}%
    \hspace{.5in} 
    \begin{subfigure}[t]{0.4\textwidth}
        \centering
        \includegraphics[width=\textwidth]{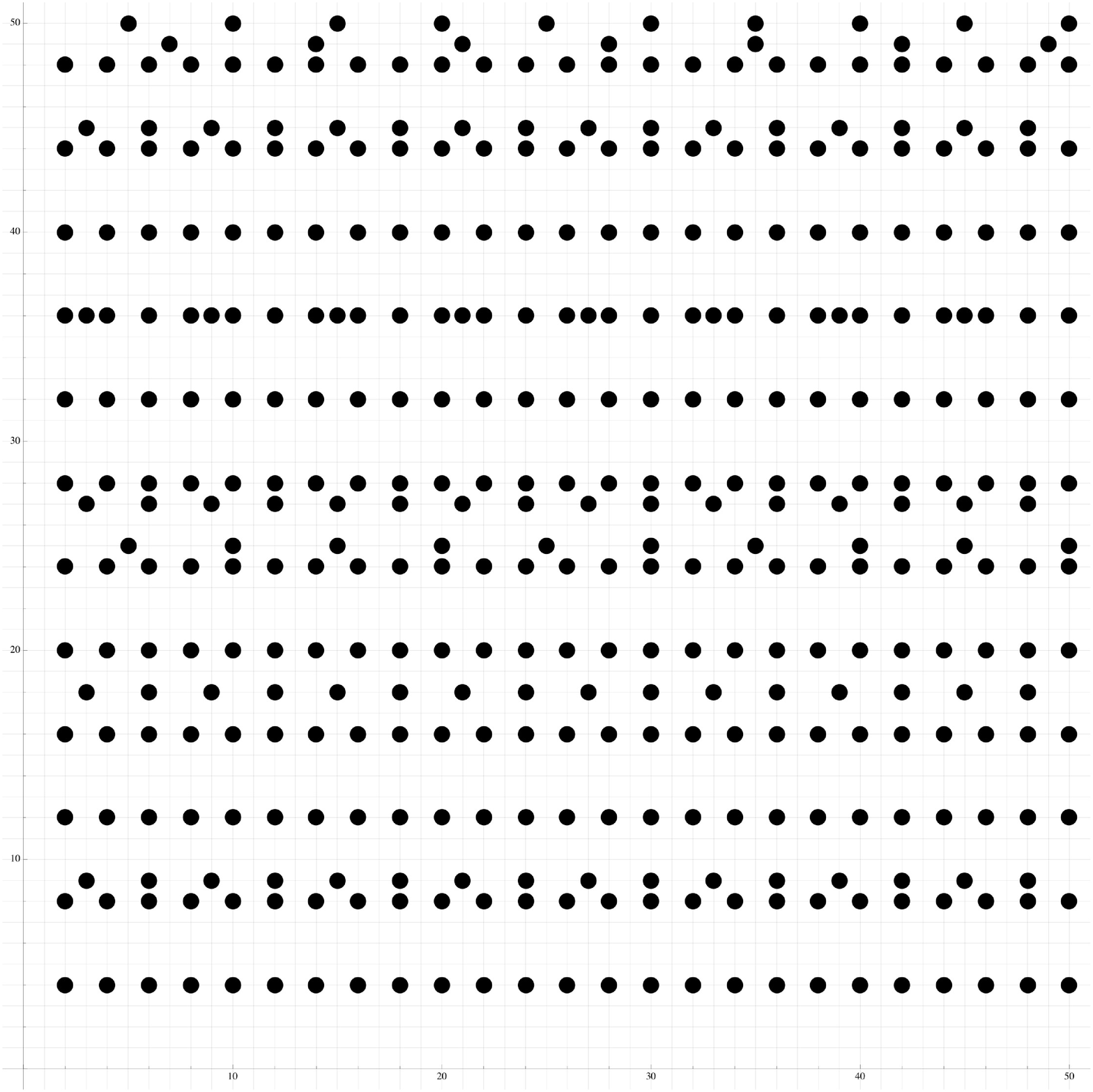}
        \caption{$b=2$}
    \end{subfigure}\\
    \begin{subfigure}[t]{0.4\textwidth}
        \centering
        \includegraphics[width=\textwidth]{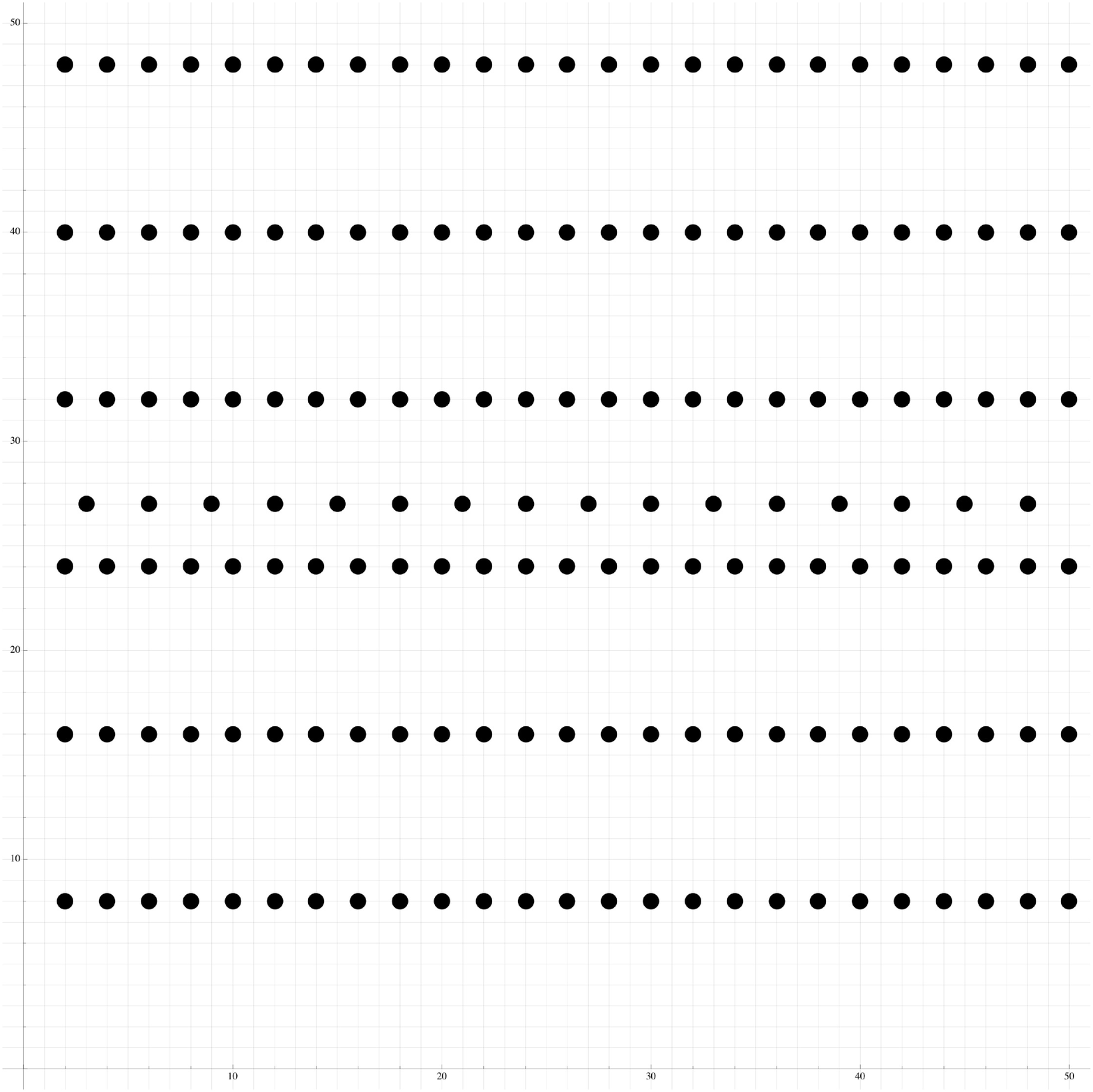}
        \caption{$b=3$}
    \end{subfigure}
    \hspace{.5in}
        \begin{subfigure}[t]{0.4\textwidth}
        \centering
        \includegraphics[width=\textwidth]{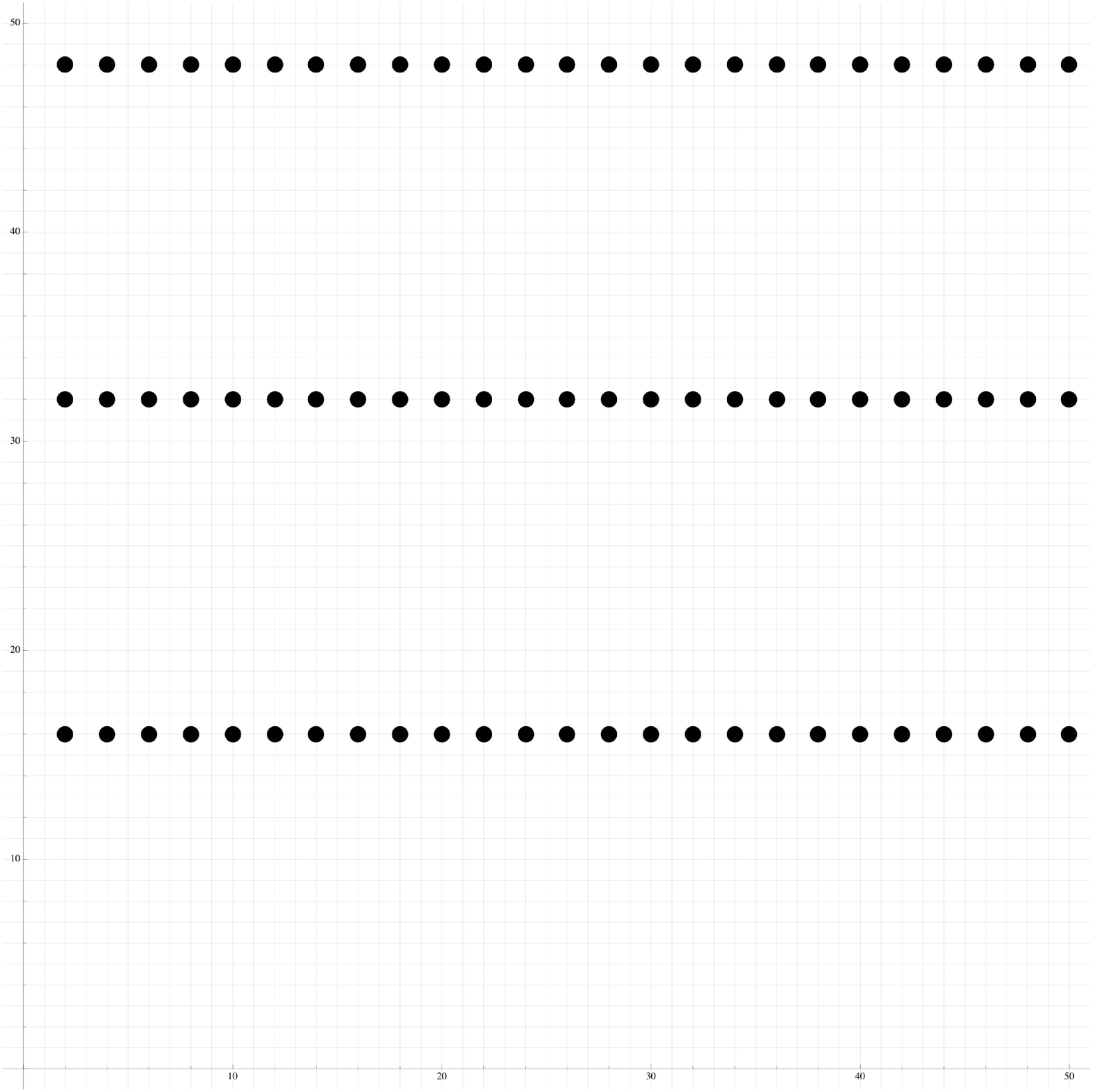}
        \caption{$b=4$}
    \end{subfigure}
    \caption{The $b$-invisible lattice points in $[0,50]\times[0,50]$ when $b=1,2,3,4$.}\label{fig:InvisibleGrid}
\end{figure}

\begin{table}[h!]
\begin{center}
\caption{Proportion of $b$-visible and $b$-invisible points for $b=1,2,3,4$ with all values approximated to 3 decimal places.}\label{table:observed_and_expected_data}

\begin{tabular}{|c||c|c|c|c|}
\hline
$b$ & $\zeta(b+1)$ & $\frac{1}{\zeta(b+1)}$ & $1 - \frac{1}{\zeta(b+1)}$ & Proportion of $b$-invisible points in $50 \times 50$ grid \\ \hline \hline
1 & 1.644 & .608 & \textbf{.392} & $953/2500\approx\mathbf{.381}$ \\
\hline
2 & 1.202 & .832 & 
\textbf{.168}& ${399}/{2500}\approx\mathbf{.160}$ \\
\hline
3 & 1.082 & .924 & 
\textbf{.076}& ${166}/{2500}\approx\mathbf{.066}$ \\
\hline
4 & 1.036 & .964 &  
\textbf{.035}& ${\phantom{0}75}/{2500}\approx\mathbf{.030}$ \\ 
\hline
\end{tabular}
\end{center}
\end{table}

\begin{theorem}\label{thm:proportion_of_b_visible_points}
Fix an integer $b\in\mathbb{N}$.  Then the proportion of points $(r,s)\in\mathbb{N}\times\mathbb{N}$ that are $b$-visible is $\displaystyle\frac{1}{\zeta(b+1)}$.
\end{theorem}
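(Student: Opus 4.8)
\emph{Proof proposal.} The plan is to first reduce the geometric visibility condition to a purely arithmetic one, and then to mimic the classical Ces\`aro–Mertens density computation that yields $1/\zeta(2)$ in the case $b=1$. To begin, fix $(r,s)\in\mathbb{N}\times\mathbb{N}$. The unique curve $y=ax^b$ through the origin and $(r,s)$ has $a=s/r^b$, so a positive integer $m$ yields a lattice point $\bigl(m,\,s\,m^b/r^b\bigr)$ on this curve precisely when $r^b\mid s\,m^b$. Since $m=r$ always works and the curve is monotone for $x>0$, the point $(r,s)$ is $b$-visible exactly when the least positive $m$ with $r^b\mid s\,m^b$ is $m=r$. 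Working one prime at a time with the $p$-adic valuation $v_p$, the condition $r^b\mid s\,m^b$ reads $b\,v_p(r)\le v_p(s)+b\,v_p(m)$, so the minimal exponent forced on $m$ at $p$ is $\max\bigl(0,\,v_p(r)-\lfloor v_p(s)/b\rfloor\bigr)$. Requiring this to equal $v_p(r)$ at every prime gives the key lemma
\[
(r,s)\text{ is }b\text{-visible}\iff\text{there is no prime }p\text{ with }p\mid r\text{ and }p^b\mid s.
\]
For $b=1$ this recovers $\gcd(r,s)=1$, consistent with the classical setting.

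With the characterization secured, I would count. Let $V_b(N)$ denote the number of $b$-visible pairs $(r,s)$ with $1\le r,s\le N$. The ``bad'' events $\{p\mid r\text{ and }p^b\mid s\}$ for distinct primes $p$ behave independently on the lattice: a squarefree product $Q=p_1\cdots p_k$ imposes $Q\mid r$ together with $Q^b\mid s$, which by the Chinese remainder theorem is met by exactly $\lfloor N/Q\rfloor\lfloor N/Q^b\rfloor$ pairs. Inclusion–exclusion (with the M\"obius function $\mu$ encoding the squarefree condition) then gives
\[
V_b(N)=\sum_{Q=1}^{N}\mu(Q)\,\Bigl\lfloor \frac{N}{Q}\Bigr\rfloor\Bigl\lfloor \frac{N}{Q^b}\Bigr\rfloor.
\]

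Finally I would extract the main term. Writing $\lfloor N/Q\rfloor=N/Q+O(1)$ and $\lfloor N/Q^b\rfloor=N/Q^b+O(1)$, the product is $N^2/Q^{b+1}+O(N/Q)$, whence
\[
V_b(N)=N^2\sum_{Q=1}^{\infty}\frac{\mu(Q)}{Q^{b+1}}+O(N\log N)=\frac{N^2}{\zeta(b+1)}+O(N\log N),
\]
using the Euler-product identity $\sum_{Q\ge1}\mu(Q)/Q^{b+1}=1/\zeta(b+1)$ and the absolute convergence of this Dirichlet series (valid since $b+1\ge2$) to bound both the tail beyond $Q=N$ and the accumulated floor errors $\sum_{Q\le N}O(N/Q)=O(N\log N)$. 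Dividing by $N^2$ and letting $N\to\infty$ yields the asserted proportion $1/\zeta(b+1)$.

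The main obstacle I anticipate is the first step: getting the arithmetic characterization exactly right. The correct condition is the \emph{asymmetric} statement ``$p\mid r$ and $p^b\mid s$'' rather than a naive symmetric coprimality condition, and it only emerges from the careful valuation analysis of the minimal admissible $m$. Once this lemma is in place, the remainder is a routine sieve computation directly generalizing the classical proof that visible points have density $1/\zeta(2)$; the only care required there is the standard bookkeeping of error terms, which is harmless precisely because $\sum_Q\mu(Q)/Q^{b+1}$ converges absolutely for every $b\in\mathbb{N}$.
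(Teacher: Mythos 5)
Your proof is correct, and it takes a genuinely different route from the paper's. The paper argues probabilistically, prime by prime, in the Euler-product style it attributes to Pinsky: for $r,s$ chosen uniformly in $\{1,\dots,N\}$ it computes the probability $P_N=\frac{1}{N^2}\left\lfloor\frac{N}{p}\right\rfloor\left\lfloor\frac{N}{p^b}\right\rfloor$ that $p\mid r$ and $p^b\mid s$, notes $P_N\to p^{-(b+1)}$, and identifies $\lim_{N\to\infty}\prod_{p\le N}(1-P_N)$ with $\prod_p\left(1-p^{-(b+1)}\right)=1/\zeta(b+1)$. You instead count directly by M\"obius inclusion--exclusion, obtaining $V_b(N)=\sum_{Q\le N}\mu(Q)\left\lfloor\frac{N}{Q}\right\rfloor\left\lfloor\frac{N}{Q^b}\right\rfloor=\frac{N^2}{\zeta(b+1)}+O(N\log N)$, which generalizes the classical rigorous proofs of Christopher and Apostol for $b=1$. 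Two comparative remarks. First, your sieve computation is self-contained and carries an explicit error term; the finite inclusion--exclusion plus the tail estimate is precisely the justification for multiplying per-prime densities over infinitely many primes, which is the step the paper's probabilistic proof handles by appeal to Pinsky's framework rather than in-line, so your version is arguably the more airtight of the two. Second, your opening valuation argument derives the criterion ``no prime $p$ with $p\mid r$ and $p^b\mid s$'' from the geometric line-of-sight definition, whereas the paper builds the arithmetic condition directly into Definition~\ref{def:visible} and records the criterion as Proposition~\ref{prop:visibility_criterion} (that is, $\ggcd_b(r,s)=1$); your lemma therefore re-proves that proposition, and in fact does a bit more by establishing the equivalence of the geometric and arithmetic formulations, which the paper takes as definitional. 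In short: the paper's route buys brevity and a transparent link to the Euler product; yours buys unconditional rigor and a quantitative rate of convergence.
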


Theorem~\ref{thm:proportion_of_b_visible_points} implies that the proportion of $b$-visible lattice points approaches $1$ as $b$ approaches infinity. However, as our next result shows, for any fixed $b\in\mathbb{N}$ there exist arbitrarily large $b$-invisible rectangular forests, that is, rectangular arrays of adjacent $b$-invisible  integer lattice points.

\begin{theorem}\label{thm:invisible_forests}
Let $b\in\mathbb{N}$. For any integers $n,m > 0$, there exists a lattice point $(r, s)$ such that every point
$(r+i,s+j)$, where $0\leq i< n $ and $0\leq j< m$, is $b$-invisible from the origin.\end{theorem}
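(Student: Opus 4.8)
The plan is to translate $b$-invisibility into a divisibility condition and then to build the rectangular array one cell at a time using the Chinese Remainder Theorem, following the classical construction of arbitrarily large square arrays of invisible points (Apostol's Theorem~5.29), but with divisibility by a $b$-th power imposed on the second coordinate.

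First I would record the arithmetic criterion that also underpins Theorem~\ref{thm:proportion_of_b_visible_points}: a point $(r,s)$ with $r\geq 1$ is $b$-invisible from the origin if and only if there is a prime $p$ with $p\mid r$ and $p^{b}\mid s$. To see this, note that $(r,s)$ lies on $y=(s/r^{b})x^{b}$, so an intermediate lattice point is an integer $0<x<r$ with $sx^{b}/r^{b}\in\mathbb{Z}$, that is, $r^{b}\mid sx^{b}$. Comparing $p$-adic valuations, the least positive integer $x$ with $r^{b}\mid sx^{b}$ always divides $r$, and it is a \emph{proper} divisor (equivalently, $0<x<r$) precisely when some prime $p\mid r$ satisfies $p^{b}\mid s$; this is exactly the invisibility condition, and for $b=1$ it reduces to $\gcd(r,s)>1$.

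With the criterion in hand, given $n,m>0$ I would choose $nm$ distinct primes $p_{i,j}$ indexed by the pairs $(i,j)$ with $0\leq i<n$ and $0\leq j<m$, arranging for $p_{i,j}$ to witness the $b$-invisibility of the cell $(r+i,s+j)$. By the criterion this reduces to solving the two systems of congruences
\[ r\equiv -i \pmod{p_{i,j}}, \qquad s\equiv -j \pmod{p_{i,j}^{\,b}} \]
taken over all such $(i,j)$. Since the $p_{i,j}$ are distinct primes, the moduli $p_{i,j}$ of the first system are pairwise coprime, as are the prime powers $p_{i,j}^{\,b}$ of the second; the Chinese Remainder Theorem then supplies positive integers $r$ and $s$ solving both systems at once. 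For each $(i,j)$ we would have $p_{i,j}\mid (r+i)$ and $p_{i,j}^{\,b}\mid (s+j)$, making $(r+i,s+j)$ $b$-invisible and yielding the desired $n\times m$ forest.

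I expect the only real content to lie in the criterion of the first step; the subsequent Chinese Remainder Theorem argument is routine. The point to watch is the asymmetry the criterion forces between the two coordinates -- the witnessing prime must divide $r+i$, while its $b$-th power must divide $s+j$ -- which is exactly why the second congruence system carries the moduli $p_{i,j}^{\,b}$ rather than $p_{i,j}$. Because the primes are chosen distinct, this asymmetry never disturbs the pairwise coprimality needed to apply the Chinese Remainder Theorem, so no serious obstacle arises.
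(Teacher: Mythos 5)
Your proposal is correct and follows essentially the same route as the paper: reduce to the divisibility criterion ($p_{i,j}\mid r+i$ and $p_{i,j}^{\,b}\mid s+j$ for a witnessing prime per cell) and solve the resulting congruence systems by the Chinese Remainder Theorem, using $nm$ distinct primes. The paper merely packages your individual congruences into column products $C_i=\prod_j p_{i,j}$ and row products $R_j^b$ with $R_j = \prod_i p_{i,j}$, which is equivalent to your cell-by-cell system, and it takes the divisibility criterion as given (Definition~\ref{def:visible} and Proposition~\ref{prop:visibility_criterion}) rather than re-deriving it geometrically as you do.
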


Although we present a proof that arbitrarily large $b$-invisible rectangular forests exist for all values $b\in\mathbb{N}$, our work does not construct forests close to the origin. In the classical $b=1$ case, the work of Herzog and Stewart used prime matrices and the Chinese remainder theorem to compute invisible square forests and they presented $2\times 2$ and $3\times 3$ invisible forests shown in Figure \ref{fig:closetrees}~\cite{HerzogStewart}.

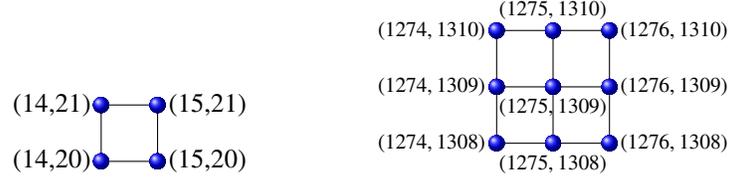
\begin{figure}[h!]
    \centering
\begin{tikzpicture}[scale=.75]
\draw (0,0)--(0,1)--(1,1)--(1,0)--(0,0);
\foreach \x in {0,1}
		\shade[ball color=blue] (\x,0) circle (.75ex);
\foreach \x in {0,1}
		\shade[ball color=blue] (\x,1) circle (.75ex);
\node [left] at (0,0) {\footnotesize (14,20)};
\node [left] at (0,1) {\footnotesize (14,21)};
\node [right] at (1,0) {\footnotesize (15,20)};
\node [right] at (1,1) {\footnotesize (15,21)};
\end{tikzpicture}
    \hspace{.5in} 
\begin{tikzpicture}[scale=.75]
\draw (0,0)--(0,2)--(2,2)--(2,0)--(0,0);
\draw (0,1)--(2,1);
\draw (1,0)--(1,2);
\foreach \x in {0,...,2}
		\shade[ball color=blue] (\x,0) circle (.75ex);
\foreach \x in {0,...,2}
		\shade[ball color=blue] (\x,1) circle (.75ex);
\foreach \x in {0,...,2}
		\shade[ball color=blue] (\x,2) circle (.75ex);
\node [left] at (0,0) {\tiny $(1274,1308)$};
\node [left] at (0,1) {\tiny $(1274,1309)$};
\node [left] at (0,2) {\tiny $(1274,1310)$};
\node [below] at (1,0) {\tiny $(1275,1308)$};
\node [below] at (1,1) {\tiny $(1275,1309)$};
\node [above] at (1,2) {\tiny $(1275,1310)$};
\node [right] at (2,0) {\tiny $(1276,1308)$};
\node [right] at (2,1) {\tiny $(1276,1309)$};
\node [right] at (2,2) {\tiny $(1276,1310)$};
\end{tikzpicture}
\caption{The $2\times2$ and $3\times 3$ invisible forests lying closest to the origin.}\label{fig:closetrees}
\end{figure}
It is easily verified that every point $(r,s)$ in the forests of Figure~\ref{fig:closetrees} satisfies the condition $\gcd(r,s) > 1$. It turns out that, up to symmetry, these are the closest invisible square forests of size $n=2$ and $n=3$. 
In a brief remark, Wolfram claims to have found the closest $4 \times 4$ invisible forest, being located approximately 12 million units from the origin~\cite[p.\ 1093]{Wolfram}. However, this has yet to be confirmed in the literature. Although to date, no one knows the closest $n \times n$ invisible square forests for $n \geq 5$, recently bounds have been given on where invisible square forests might exist in the integer lattice~\cite{Laishram,Pigh02}. Finding such bounds in our generalized setting remains an open problem.

Our paper is organized as follows. Section~\ref{Section2} contains the necessary definitions to make our approach precise. Section~\ref{Section3} provides a proof of Theorem~\ref{thm:proportion_of_b_visible_points}.  Section~\ref{Section4} gives a construction of arbitrarily large rectangular  $b$-invisible forests, thereby proving Theorem~\ref{thm:invisible_forests}.


\section{Background}\label{Section2}
 
The results presented in this paper are limited to the first quadrant of the plane, and, due to the symmetry of the plane, our results can be easily extended to apply to all of $\mathbb{Z}\times\mathbb{Z}$.

\begin{definition}\label{def:visible} Fix $b\in \mathbb{N}$. A point $(r,s)\in \mathbb{N}\times\mathbb{N}$ is said to be \textit{$b$-invisible} if the following two conditions hold:
\begin{enumerate}[(1)]
\item The point $(r,s)$ lies on the graph of $f(x)=ax^b$ for some $a\in \mathbb{Q}$.  That is, $s=ar^b$.
\item There exists an integer $k>1$ such that $k$ divides $r$ and $k^b$ divides $s$.
\end{enumerate}
The point is said to be \textit{$b$-visible} if it satisfies Condition~1, but fails to satisfy Condition~2.
\end{definition} 

When we say that a point is $b$-invisible or $b$-visible, it is always with respect to the origin. If $(r,s)\in \mathbb{N}\times\mathbb{N}$ is $b$-invisible  and Condition 1 is satisfied by the function $f(x)=ax^b$, then $(-r,s)$, $(-r,-s)$, and $(r,-s)$ are $b$-invisible  under the functions $a(-x)^b$, $-a(-x)^b$, and $-ax^b$, respectively, and likewise for $b$-visible points.
Thus in our study it suffices to determine the $b$-visibility (meaning, whether the point is $b$-visible or $b$-invisible) of the lattice points in $\mathbb{N}\times\mathbb{N}$. 

To speak about the $b$-visibility of a lattice point in this new setting, we develop a generalization of the greatest common divisor.

\begin{definition}\label{def:ggcd}
Fix $b \in \mathbb{N}$.   
The \textit{generalized greatest common divisor} of $r$ and $s$ with respect to $b$ is denoted $\ggcd_b$ and is defined as
$$\ggcd_b(r,s) := \max\{k \in \mathbb{N} \mid k \textrm{ divides } r \textrm{ and } k^b \textrm{ divides } s\}.$$
\end{definition}

Observe that $\ggcd_b$ coincides with the classical greatest common divisor when $b$ equals $1$. Moreover, from the lattice point visibility language, the new generalized greatest common divisor implies that for a fixed $b\in\mathbb{N}$ the point $(r,s)$ is \textit{$b$-visible} if there exists a function $f(x)=ax^b$ with $a\in\mathbb{Q}$ such that $(r,s)$ is on  the graph of $f$ and is the first integral point on the graph of $f$ from the origin. The following result gives a necessary and sufficient condition to determine $b$-visibility.

\begin{proposition}\label{prop:visibility_criterion}
A point $(r,s)\in \mathbb{N}\times\mathbb{N}$ is $b$-visible if and only if $\ggcd_b(r,s)=1$.
\end{proposition}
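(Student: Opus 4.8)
The plan is to show that both sides of the stated equivalence reduce to the same elementary divisibility condition, so that the proof is essentially a careful unwinding of Definitions~\ref{def:visible} and~\ref{def:ggcd} rather than a substantive computation.

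First I would observe that Condition~1 of Definition~\ref{def:visible} is automatically satisfied by \emph{every} point $(r,s)\in\mathbb{N}\times\mathbb{N}$: since $r\geq 1$ we may take $a=s/r^b\in\mathbb{Q}$, and then $s=ar^b$, so $(r,s)$ lies on the graph of $f(x)=ax^b$. Consequently, whether $(r,s)$ is $b$-visible or $b$-invisible is governed entirely by Condition~2. Explicitly, $(r,s)$ is $b$-visible precisely when it fails Condition~2, i.e., precisely when there is \emph{no} integer $k>1$ with $k\mid r$ and $k^b\mid s$.

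Next I would analyze the set $S=\{k\in\mathbb{N}\mid k\mid r\text{ and }k^b\mid s\}$ appearing in Definition~\ref{def:ggcd}. This set always contains $k=1$ (as $1\mid r$ and $1^b=1\mid s$), and every element satisfies $k\mid r$, hence $k\leq r$; thus $S$ is a nonempty finite set and $\ggcd_b(r,s)=\max S$ is well-defined. It follows that $\ggcd_b(r,s)=1$ if and only if $S=\{1\}$, which holds if and only if no integer $k>1$ belongs to $S$.

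Combining the two observations yields the equivalence directly: $(r,s)$ is $b$-visible $\iff$ no integer $k>1$ satisfies $k\mid r$ and $k^b\mid s$ $\iff\ggcd_b(r,s)=1$. I do not expect any genuine obstacle here, as the argument is purely definitional; the only point deserving care is verifying that Condition~1 imposes no restriction on points of $\mathbb{N}\times\mathbb{N}$, since it is exactly this fact that allows $b$-visibility to collapse onto the single divisibility condition encoded by $\ggcd_b$.
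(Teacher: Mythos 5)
Your proof is correct and takes essentially the same approach as the paper: a direct unwinding of Definitions~\ref{def:visible} and~\ref{def:ggcd}, using the observation that $a=s/r^b$ makes Condition~1 automatic so that $b$-visibility reduces to the nonexistence of $k>1$ with $k\mid r$ and $k^b\mid s$. Your version is slightly more careful in stating this reduction up front and in checking that $\ggcd_b(r,s)$ is well-defined (the set of admissible $k$ is nonempty and finite), but the argument is the same.
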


\begin{proof}
By Definition~\ref{def:visible}, a point $(r,s)\in \mathbb{N}\times\mathbb{N}$ is $b$-visible if $s=ar^b$ for some $a\in \mathbb{Q}$ and there does not exist an integer $k>1$ such that $k$ divides $r$ and $k^b$ divides $s$. Hence the largest positive integer that satisfies the visibility criterion is $1$. Thus $\ggcd_b(r,s)=1$.

For the other direction, suppose that $\ggcd_b(r,s)=1$.  Then $k=1$ is the largest integer such that $k$ divides $r$ and $k^b$ divides $s$ and the point $(r,s)$ does not satisfy Condition 2 of  Definition~\ref{def:visible}. Also, note that for every pair $(r,s)$, there exists a unique $a=s/r^b\in \mathbb{Q}$ such that $s=ar^b$.  Hence $(r,s)$ is $b$-visible.
\end{proof}

Note that in the classical $b=1$ setting of lattice point visibility, a point $(r,s)$ is visible if and only if $\gcd(r,s)=1$. Hence, 
Proposition~\ref{prop:visibility_criterion} generalizes the condition for a lattice point to be $b$-visible via the generalized greatest common divisor $\ggcd_b$ as stated in Definition~\ref{def:ggcd}. We also remark that the same integer lattice point can be $b$-visible and $b'$-invisible for distinct $b$ and $b'$. We illustrate this in the following example.

\begin{example} In Figure \ref{fig:exampledifferentcurves} the dotted curve is $f(x)=7x$, the dashed curve is $g(x)=x^2$, and the solid curve is $h(x)=\frac{1}{7}x^3$. A white node denotes a visible point, while a black node denotes an invisible point. In particular, the white-black point at $(7,49)$ is not 1-visible since $\gcd(7,49)=7$ and is not 2-visible since $\ggcd_2(7,49)=7$. However it is 3-visible since $\ggcd_3(7,49)=1$.

\begin{figure}
\centering
\begin{tikzpicture}[yscale=.07,xscale=1.1, domain=0:7.883735]  

\foreach \x in {1,...,8}
		\draw [very thin] (\x,0) -- (\x,70);

\foreach \x in {7,14,...,63,70}
		\draw [very thin] (0,\x) -- (8,\x);
		
\draw [<->, ultra thick, blue] (0,75) -- (0,0) -- (8.25,0);

\draw[dotted,very thick,red] (0,0) -- (8,56);
\draw[dashed,very thick,red] (0,0) parabola (8,64);
\draw[very thick,red]plot (\x,{(1/7)*\x^3});

\node at (1,7) {\tikzcircle[fill=white]{2.5pt}};
\foreach \x in {2,...,6}
	\node at (\x,\x*7) {\tikzcircle[fill=black]{2.5pt}};
\node at (8,56) {\tikzcircle[fill=black]{2.5pt}};

\node at (1,1) {\tikzcircle[fill=white]{2.5pt}};
\foreach \x in {2,...,6}
	\node at (\x,\x*\x) {\tikzcircle[fill=black]{2.5pt}};
\node at (8,64) {\tikzcircle[fill=black]{2.5pt}};

\node at (7,49) {\tikzcircle[fill=white,white]{2.75pt}}; 
\node at (7,49) {$\circlerighthalfblack$};

\node [above left] at (1,6.5) {\scriptsize(1,7)};
\node [above left] at (2,13.5) {\scriptsize(2,14)};
\node [above left] at (3,20.5) {\scriptsize(3,21)};
\node [above left] at (4,27.5) {\scriptsize(4,28)};
\node [above left] at (5,34.5) {\scriptsize(5,35)};
\node [above left] at (6,41.5) {\scriptsize(6,42)};

\node [above left] at (8,55.5) {\scriptsize(8,56)};

\node [above left] at (1,-1) {\scriptsize(1,1)};
\node [above left] at (2,2) {\scriptsize(2,4)};
\node [above left] at (3,7) {\scriptsize(3,9)};
\node [above left] at (4,14) {\scriptsize(4,16)};
\node [above left] at (5,23) {\scriptsize(5,25)};
\node [above left] at (6,34) {\scriptsize(6,36)};
\node [above left] at (7,48.5) {\scriptsize(7,49)};
\node [above left] at (8,62) {\scriptsize(8,64)};

\foreach \x in {1,...,8}
		\node [below] at (\x,-.75) {\x};
\foreach \x in {7,14,...,63,70}
		\node [left] at (-.075,\x) {\x};
\node at (-0.25,-0.25) {\includegraphics[width=5ex]{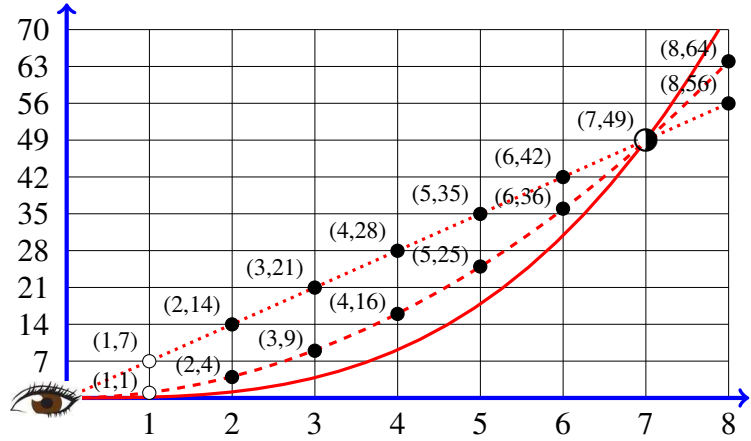}};
\end{tikzpicture}
\caption{Invisible and visible points under different lines of sights.}\label{fig:exampledifferentcurves}
\end{figure}
\end{example}


\section{Proportion of \texorpdfstring{$b$}{b}-visible lattice points}\label{Section3}

The literature on lattice point visibility presents rigorous proofs of the $b=1$ case of Theorem~\ref{thm:proportion_of_b_visible_points}, in particular in \textsc{Monthly} articles by Casey and Sadler~\cite[Theorem~1]{Casey} and Christopher~\cite[Theorem 1]{Christopher}. Other recent proofs (see~\cite{Abrams-Paris1992, Apostol2000}) give illuminating plausibility arguments but are merely heuristic sketches as there is no uniform probability distribution on the natural numbers and these arguments gloss over this important fact. However, these proofs can be made rigorous by the methods presented by Pinsky~\cite{Pinsky}. 
Following an analogous method, we now present a proof of our result regarding the proportion of $b$-visible points in the lattice, for $b\geq 1$.

\begin{proof}[Proof of Theorem~\ref{thm:proportion_of_b_visible_points}]
Fix $N, b \in \mathbb{N}$. Let $[N]:=\{1, 2, \ldots, N\}$. Let $r,s$ be two numbers picked independently with uniform probability in $[N]$ and fix a prime $p$ in  $[N]$. By Proposition~\ref{prop:visibility_criterion}, a point $(r,s)\in\mathbb{N}\times\mathbb{N}$ is $b$-visible if and only if $\ggcd_b(r,s)=1$. Let $P_N$ denote the probability that $p$ divides $r$ and $p^b$ divides $s$. There are $\left\lfloor{\frac{N}{p}}\right\rfloor$ integers in $[N]$ that are divisible by $p$; namely $p, 2p, \ldots, \left\lfloor{\frac{N}{p}}\right\rfloor p$. Thus the probability that $p$ divides $r$ is $\frac{1}{N}\left\lfloor{\frac{N}{p}}\right\rfloor$. Similarly, the probability that $p^b$ divides $s$ is $\frac{1}{N}\left\lfloor{\frac{N}{p^b}}\right\rfloor$. By mutual independence, the probability that $p$ divides $r$ and that $p^b$ divides $s$ is $P_N = \frac{1}{N^2} \left\lfloor{\frac{N}{p}}\right\rfloor \left\lfloor{\frac{N}{p^b}}\right\rfloor$. Therefore, the probability that $p$ does not divide $r$ or that $p^b$ does not divide $s$ is $1 - P_N$. Since $P_N \rightarrow \frac{1}{p^{b+1}}$ as $N \rightarrow \infty$, by multiplying over all of the primes we have that the probability that $p$ does not divide $r$ or that $p^b$ does not divide $s$ given that $p$ is prime is

\begin{equation*}\label{eqprime_new}
\displaystyle \lim_{N \rightarrow \infty} \prod_{\substack{p\;\text{prime}\\p\leq N}} \left( 1 - P_N \right) = 
\prod_{p\;\text{prime}}  \left(1-\frac{1}{p^{b+1}}\right)=\frac{1}{\zeta(b+1)},
\end{equation*}
where $\zeta(s)=\prod_{p\;\text{prime}}\left(1-1/p^s\right)^{-1}$.
\end{proof}


\section{Arbitrarily large \texorpdfstring{$b$}{b}-invisible forests}\label{Section4}

We exploit the Chinese remainder theorem to prove that arbitrarily large $m\times n$ arrays of adjacent  $b$-invisible integer lattice points in the plane exist for every $b \in \mathbb{N}$. We call such arrays of points $b$-invisible rectangular forests of size $m\times n$.

\begin{proof}[Proof of Theorem~\ref{thm:invisible_forests}]
It suffices to show that there exists a pair $(r,s)\in\mathbb{N}\times\mathbb{N}$ such that $\ggcd_b(r+i,s+j)\neq1$ for all $0\leq i<n$ and $0\leq j<m$.  To obtain a pair $(r,s)$, we first choose $mn$ distinct primes and label them $p_{i,j}$ where $0\leq i<n$ and $0\leq j<m$. Place the primes in a matrix as follows
$$
P_{m \times n} = 
 \begin{pmatrix}
  p_{0,m-1} & p_{1,m-1} & \cdots & p_{n-1,m-1} \\
  \vdots  & \vdots  & \iddots & \vdots  \\
  p_{0,1} & p_{1,1} & \cdots & p_{n-1,1} \\
  p_{0,0} & p_{1,0} & \cdots & p_{n-1,0} 
 \end{pmatrix}.
$$
The choice of the nonstandard indexing of the entries in the matrix $P_{m\times n}$ will become clear at the proof's conclusion. Set $C_i=\prod_{j=0}^{m-1}p_{i,j}$ and $R_j=\prod_{i=0}^{n-1}p_{i,j}$ and  
consider the following systems of linear congruences:

$$
\begin{cases}
\hspace{1cm}r+0 &\equiv 0 \pmod{C_0}\\
\hspace{1cm}r+1 &\equiv 0 \pmod{C_1}\\
&\hspace{1mm}\vdots  \\
r+(n-1) &\equiv 0 \pmod{C_{n-1}}
\end{cases}
\hspace{.3in}\mbox{and}\hspace{.3in}
\begin{cases}
\hspace{1cm}s+0 &\equiv 0 \pmod{R_0^b}\\
\hspace{1cm}s+1 &\equiv 0 \pmod{R_1^b}\\
&\hspace{1mm} \vdots  \\
s+(m-1) &\equiv 0 \pmod{R_{m-1}^b}.
\end{cases}
$$
The integers in the set $\{C_i\}_{i=0}^{n-1}$ are pairwise relatively prime.  Thus, by the Chinese remainder theorem, there exists a unique solution $r\pmod {\prod_{i=0}^{n-1} C_i$}.  Similarly the integers in the set $\{R_j\}_{j=0}^{m-1}$ are pairwise relatively prime and hence there is a unique solution $s\pmod {\prod_{j=0}^{m-1} R_j^b}$.

For each $0 \leq i < n$ and $0 \leq j < m$, we have by construction that $C_i$ divides $r+i$ and $R_j^b$ divides $s+j$, and thus $p_{i,j}$ divides $r+i$ and $p_{i,j}^b$ divides $s+j$. Hence $p_{i,j}$ divides $\ggcd_b(r+i, s+j)$ and so $\ggcd_b(r+i, s+j) \neq 1$. Hence every point $(r+i, \, s+j) \in \mathbb N \times \mathbb N$ with $0 \leq i < n$ and $0 \leq j < m$ is $b$-invisible, as desired.
\end{proof}

The proof of Theorem~\ref{thm:invisible_forests} constructs $b$-invisible forests of any dimension.  We illustrate this process below by constructing a $2$-invisible forest of size $2\times 3$.

\begin{example}\label{exam:2by3_forest} 
 Consider the prime matrix 
 $$P_{2 \times 3} = \left(
 \begin{array}{ccc}
 7 & 11 & 13\\
 2 & 3 & 5
 \end{array}
 \right).$$
Using the technique described in Theorem~\ref{thm:invisible_forests},
we compute the unique solution $r_0 \pmod{N}$ and $s_0 \pmod{N^2}$, where $N = 2 \cdot 3 \cdot 5 \cdot 7 \cdot 11 \cdot 13$, to the required system of linear congruences
\begin{align*}
r_0=r+0 &= 27 818 = 2 \cdot 7 \cdot 1987 \\
r_1=r+1 &= 27819 = 3^2 \cdot 11 \cdot 281 \\
r_2=r+2 &= 27820 = 2^2 \cdot 5 \cdot 13 \cdot 107\\
 s_0=s+0 &= 602202600 = 2^3 \cdot 3^5 \cdot 5^2 \cdot 12391\\
 s_1=s+1 &= 602202601 = 7^2 \cdot 11^2 \cdot 13^2 \cdot 601.
\end{align*}
The forest we have constructed is shown in Figure~\ref{tree23} with each corresponding value $\ggcd_2(r_i,s_j)$ noted in red.  One can easily verify that each of the six lattice points is $2$-invisible; indeed as the proof of Theorem~\ref{thm:invisible_forests} states, each prime $p_{i,j}$
in the prime matrix $P_{2 \times 3}$ divides the corresponding point $(r_i, s_j)$.

\begin{figure}[h]
\centering
\begin{tikzpicture}[scale=1.25]
\draw (0,0)--(0,1)--(2,1)--(2,0)--(0,0);
\draw (1,0)--(1,1);
\foreach \x in {0,1,2}
		\shade[ball color=blue] (\x,0) circle (.5ex);
\foreach \x in {0,1,2}
		\shade[ball color=blue] (\x,1) circle (.5ex);
\node [below] at (0,0) {\footnotesize $(r_0,s_0)$};
\node [above] at (0,1) {\footnotesize $(r_0,s_1)$};
\node [below] at (1,0) {\footnotesize $(r_1,s_0)$};
\node [above] at (1,1) {\footnotesize $(r_1,s_1)$};
\node [below] at (2,0) {\footnotesize $(r_2,s_0)$};
\node [above] at (2,1) {\footnotesize $(r_2,s_1)$};
\node [below] at (0,-.5) {\footnotesize \textcolor{red}{$2$}};
\node [above] at (0,1.5) {\footnotesize \textcolor{red}{$7$}};
\node [below] at (1,-.5) {\footnotesize \textcolor{red}{$3^2$}};
\node [above] at (1,1.5) {\footnotesize \textcolor{red}{$11$}};
\node [below] at (2,-.5) {\footnotesize \textcolor{red}{$2 \cdot 5$}};
\node [above] at (2,1.5) {\footnotesize \textcolor{red}{$13$}};
\end{tikzpicture}
\caption{A $2$-invisible forest of size $2\times3$.}\label{tree23}
\end{figure}
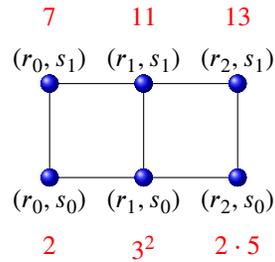
\end{example}

Although Theorem~\ref{thm:invisible_forests} provides a way to find $b$-invisible forests of an arbitrary size, it does not necessarily indicate which ones will be close to the origin. Finding the closest known invisible square forests (when $b=1$) was explored by Goodrich, Mbirika, and Nielsen~\cite{Mbirika2015}. In fact, using techniques from~\cite{Mbirika2015}, we find a closer hidden forest with $(r,s)=(440,38024)
$.  
An exhaustive computer implementation confirms that this is the closest $2$-invisible forest of size $2 \times 3$ in the first quadrant. We end by posing the following $b$-visibility problem:
For fixed values $b,n,m\in\mathbb{N}$, find the nearest $b$-invisible forest of dimension $n\times m$.


\begin{footnotesize}

\bigskip
\bigskip

\noindent
\textbf{\uppercase{Acknowledgments.}} We thank Stephan Garcia for helpful references and for a conversation at REUF4 at ICERM in Summer 2012 which motivated the fourth author's interest in lattice point visibility. We also thank the undergraduate UW-Eau Claire research students Austin Goodrich, Jasmine Nielsen, Michele Gebert, and Sara DeBrabander who studied lattice point visibility both in the classic and generalized cases with us.

\bigskip
\bigskip

\noindent
\textbf{\uppercase{Edray Herber Goins}} grew up in South Los Angeles, California.  He works in the field of number theory, as it pertains to the intersection of representation theory and algebraic geometry.   He is currently the president of the National Association of Mathematics.\\
\textit{Department of Mathematics, Purdue University, West Lafayette IN 47906}\\
{\tt egoins@purdue.edu}

\bigskip

\noindent
\textbf{\uppercase{Pamela E. Harris}} received her PhD degree in mathematics from the University of Wisconsin Milwaukee, held a postdoctoral position at the United Stated Military Academy, and is now an Assistant Professor of Mathematics at Williams College.  Her research interests are in algebra and combinatorics, particularly as these subjects relate to the representation theory of Lie algebras. Her service commitments aim to increase the visibility of Latinx and Hispanic mathematicians via platforms such as \url{http://www.lathisms.org}.\\
\textit{Department of Mathematics and Statistics, Williams College, Williamstown MA 01267}\\
{\tt peh2@williams.edu}

\bigskip

\noindent
\textbf{\uppercase{Bethany Kubik}} received her PhD from North Dakota State University, held a postdoctoral position at the United States Military Academy, and is now an Assistant Professor at University of Minnesota Duluth. Her research interests include homological algebra, factorization, and graph theory.\\
\textit{Department of Mathematics and Statistics, University of Minnesota Duluth, Duluth MN 55812}\\
{\tt bakubik@d.umn.edu}

\bigskip

\noindent
\textbf{\uppercase{Aba Mbirika}} received his PhD from University of Iowa, held a postdoctoral position at Bowdoin College, and very recently in Fall 2017 became an Associate Professor of Mathematics at University of Wisconsin-Eau Claire. His mathematical interests include combinatorial representation theory, complex reflection groups, lattice point visibility, and graph labelings. aBa (his preferred spelling) does not know how to drive, but he happily bikes even in the Wisconsin winters. His answer to the question ``Where are you from?'' is always Iowa because in his graduate school years there, Iowa City became his favorite place on Earth.\\
\textit{Department of Mathematics, University of Wisconsin-Eau Claire, Eau Claire WI 54701}\\
{\tt mbirika@uwec.edu}

\end{footnotesize}

\end{document}